\newtheorem{theorem}{Theorem}
\newtheorem{proposition}{Proposition}
\newtheorem{lemma}{Lemma}
\numberwithin{equation}{section}
\def\Z{\mathbb{Z}}
\def\Zp{\mathbb{Z}_{p}}
\def\X{\mathbb{X}}
\def\Z2{\mathbb{Z}_{2}}
\def\m2#1{\ ({\rm mod} \ 2^{#1})}
\begin{document}

\title{Dynamics of Chebyshev polynomials on $\mathbb{Z}_{2}$ }

\author{Shilei Fan}

\address{School of Mathematics and Statistics \& Hubei Key Laboratory of Mathematical Sciences, Central  China Normal University,  Wuhan, 430079,  P.  R.  China}
\email{slfan@mail.ccnu.edu.cn}

\author{Lingmin Liao}
\address{LAMA, UMR 8050, CNRS,
Universit\'e Paris-Est Cr\'eteil Val de Marne, 61 Avenue du
G\'en\'eral de Gaulle, 94010 Cr\'eteil Cedex, France}
\email{lingmin.liao@u-pec.fr}

\thanks{Shilei Fan was  supported NSF of China (Grant No.11401236 and 11471132). Lingmin Liao was partially supported by 12R03191A - MUTADIS (France).}
\begin{abstract}
The dynamical structure of  Chebyshev polynomials on $\mathbb{Z}_2$, the ring of $2$-adic integers, 
 is fully described by showing all its minimal subsystems and their attracting basins.

\end{abstract}
\subjclass[2010]{Primary 37P05; Secondary 11S82, 37B05}
\keywords{$p$-adic
dynamical system, minimal decomposition, Chebyshev polynomials}
\maketitle



\section{Introduction}
For each integer $m\geq 0$,  the $m$-th Chebyshev polynomial is defined as
\begin{align}\label{def-poly}
T_{m}(x)=\sum_{k=0}^{\lfloor m/2 \rfloor}(-1)^{k}\frac{m}{m-k} {m-k \choose k}
2^{m-2k-1}x^{m-2k}.
\end{align}
The Chebyshev polynomials are useful in many parts of analysis, especially in approximation theory. For the fundamental properties and applications of Chebyshev polynomials, we refer to the books \cite{MH03, Riv74}.

Let $p$ be a prime number. Recently, polynomials were studied as $1$-Lipschitz dynamical systems on the ring $\mathbb{Z}_p$ of $p$-adic integers. See  the book \cite{Anashin-Khrennikov-AAD} for the development in this topic. 

A dynamical system is a continuous transformation acting on a topological space. 
To understand a dynamical system, we want to know how a point moves under the iteration of the transformation. A (sub)-system is called {\it minimal} if every orbit is dense in the (sub)-space. The minimality of the polynomial (or $1$-Lipschitz) dynamical systems on $\mathbb{Z}_p$ was widely studied \cite{Ana94,Ana06,AKY11,CP11Ergodic,DP09, FLYZ07, FanSLiao, Jeong2013,OZ75}.

In general, the following theorem shows that a polynomial with degree at least $2$ has only finite number of periodic orbits and has at most countably many minimal subsystems.
\begin{theorem}[\cite{FL11}, Theorem 1]\label{thm-decomposition}
 Let $f \in \mathbb{Z}_p[x]$ be a polynomial of integral coefficients with degree at least  $2$. We have the following decomposition
$$
     \mathbb{Z}_p = \mathcal{P} \bigsqcup \mathcal{M} \bigsqcup \mathcal{B}
$$
where $ \mathcal{P}$ is the finite set consisting of all periodic points of
$f$, $\mathcal{M}= \bigsqcup_i \mathcal{M}_i$ is the union of all (at most countably
many) clopen invariant sets such that each $\mathcal{M}_i$ is a finite union
of balls and each subsystem $f: \mathcal{M}_i \to \mathcal{M}_i$ is minimal, and each
point in $\mathcal{B}$ lies in the attracting basin of a periodic orbit or of
a minimal subsystem.
\end{theorem}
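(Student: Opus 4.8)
The plan is to study $f$ through the finite dynamical systems it induces on the residue rings $\mathbb{Z}/p^n\mathbb{Z}$ and then to pass to the inverse limit. Since $f$ has coefficients in $\mathbb{Z}_p$, the factorization $f(x)-f(y)=(x-y)g(x,y)$ with $g\in\mathbb{Z}_p[x,y]$ shows that $f$ is $1$-Lipschitz, so it sends each ball of radius $p^{-n}$ (a coset of $p^n\mathbb{Z}_p$) into a single ball of the same radius and descends to a map $f_n:\mathbb{Z}/p^n\mathbb{Z}\to\mathbb{Z}/p^n\mathbb{Z}$ for every $n$. Because $\mathbb{Z}/p^n\mathbb{Z}$ is finite, every orbit of $f_n$ is eventually periodic, so $f_n$ partitions its space into the basins of finitely many cycles. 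The balls forming one cycle of $f_n$ assemble into a clopen $f$-invariant set that $f$ permutes cyclically, and the whole analysis reduces to understanding how such a cycle at level $n$ behaves when each of its balls is split into its $p$ sub-balls at level $n+1$.

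The next step is to isolate the first-return map of a level-$n$ cycle to one of its balls and, after an affine renormalization identifying that ball with $\mathbb{Z}_p$, to record its action on the $p$ sub-balls, i.e.\ on $\mathbb{Z}/p\mathbb{Z}$. The heart of the argument is a trichotomy for this induced action: the cycle either (i) \emph{grows}, lifting to a single cycle of $p$ times the length at level $n+1$; (ii) \emph{splits}, lifting to $p$ disjoint cycles of the same length; or (iii) \emph{stabilizes}, the lifted cycles eventually reproducing the same ball structure so that the level-$n$ cycle refines to genuine periodic points of $f$. This classification is read off from the reduction of the return map together with its linear part (multiplier), and it is the key structural step. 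I would then check that each case is self-consistent under further refinement, so that every point is tracked through a well-defined nested sequence of cycles.

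With the trichotomy in hand, the decomposition is assembled by passing to the limit over $n$. A thread of cycles that grows infinitely often yields, as the decreasing intersection of the associated clopen sets (each a finite union of balls), a compact $f$-invariant set on which $f$ is minimal; the collection of all such threads produces $\mathcal{M}=\bigsqcup_i\mathcal{M}_i$, and since only finitely many cycles occur at each level the number of threads is at most countable. A thread that from some level on only stabilizes converges to a genuine periodic orbit, and these orbits constitute $\mathcal{P}$. Every remaining point lies in the basin of a cycle at each level, hence in the attracting basin of one of the periodic orbits or minimal subsystems, giving $\mathcal{B}$; the three pieces are disjoint and saturated by balls by construction.

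The main obstacle I expect is proving that $\mathcal{P}$ is finite, which is exactly where the hypothesis $\deg f\ge 2$ enters. A useful preliminary observation is that a periodic point $x_0$ of period $n$ has multiplier $\lambda=(f^n)'(x_0)\in\mathbb{Z}_p$, so $|\lambda|_p\le 1$ and no periodic point in $\mathbb{Z}_p$ is repelling. Attracting cycles ($|\lambda|_p<1$) each carry an open basin and must absorb a critical point, so their number is bounded by $\deg f-1$; indifferent cycles ($|\lambda|_p=1$) split into the case $\lambda\equiv 1\pmod p$, which the trichotomy forces to grow rather than persist, so these threads do not contribute to $\mathcal{P}$, and the case $\lambda\not\equiv 1\pmod p$, which is governed by the finitely many roots of unity in $\mathbb{Z}_p^{\times}$. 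Converting these heuristics into a finiteness bound that is \emph{uniform over all periods}, rather than one that degrades as the period grows --- for instance via a Weierstrass-preparation estimate on the number of non-repelling solutions of $f^{n}(x)=x$ in $\mathbb{Z}_p$ --- is the technical crux. By contrast, the at-most-countability of $\mathcal{M}$ follows comparatively easily, since each level contributes only finitely many cycles.
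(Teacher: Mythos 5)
Your overall skeleton does match the proof strategy of \cite{FL11} that Section~\ref{top-2-adic} of this paper recalls: reduce $f$ to the induced maps $f_n$ on $\mathbb{Z}/p^n\mathbb{Z}$, classify how a cycle at level $n$ lifts to level $n+1$ via the linearization $\Phi(x,t)=b_n(x)+a_n(x)t$, and pass to the limit using the criterion of Lemma~\ref{minimal-part-to-whole}. But two of your key steps are wrong or missing. First, the classification step: at a \emph{genuine} periodic point $x_0$ of period $k$ one has $b_n(x_0)=(f^k(x_0)-x_0)/p^n=0$ for every $n$, so the cycle through $x_0$ splits at every level regardless of its multiplier; your claim that multiplier $\lambda\equiv 1\pmod p$ ``forces growth rather than persistence, so these threads do not contribute to $\mathcal{P}$'' is exactly backwards. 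The central examples of this very paper refute it: for odd $m$ the points $0,\pm 1$ are fixed points of $T_m$ with $T_m'\equiv 1\pmod 4$, and they do belong to $\mathcal{P}$ (Theorem~\ref{decomposition}). Moreover, for $p\geq 3$ your trichotomy omits a fourth case, $a_n\not\equiv 0,1 \pmod p$, where $\Phi$ has a fixed point and permutes the remaining residues in cycles of length $\mathrm{ord}(a_n)\mid p-1$ (``partial splitting''); and even for $p=2$, growth is \emph{not} automatically inherited under refinement --- only strong growth is (Lemma~\ref{grows}), while weak growth requires separate treatment --- so the ``self-consistency under further refinement'' you defer to is precisely the nontrivial mod-$4$ bookkeeping of the inheritance lemmas.

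Second, and most seriously, the finiteness of $\mathcal{P}$, which you correctly identify as the crux, is left unproved, and both routes you sketch fail. The Fatou-style argument ``attracting cycles must absorb a critical point, hence at most $\deg f-1$ of them'' is a fact of complex dynamics that is false non-archimedeanly: for $f(x)=px+x^p\in\mathbb{Z}_p[x]$ the fixed point $0$ is attracting and its immediate basin in $\mathbb{C}_p$ is the open unit disk, which contains none of the critical points (these satisfy $x^{p-1}=-1$, so $|x|_p=1$); in addition, critical points of a polynomial in $\mathbb{Z}_p[x]$ need not lie in $\mathbb{Q}_p$ at all, so no count inside $\mathbb{Z}_p$ can be run this way. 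A Weierstrass-preparation count of the roots of $f^n(x)-x$ gives a bound of order $(\deg f)^n$, which, as you admit, degrades with the period. The missing input is a uniform bound on the possible cycle lengths of a polynomial acting on $\mathbb{Z}_p$ --- a theorem of Pezda (on polynomial cycles in local domains), which is what the proof in \cite{FL11} rests on: once periods are bounded by some $B=B(p)$, one gets $\mathcal{P}\subset\bigcup_{n\leq B}\{x\in\mathbb{Z}_p: f^n(x)=x\}$, and each of these sets is finite because $f^n(x)-x$ is a nonconstant polynomial, which is exactly where the hypothesis $\deg f\geq 2$ enters. Without this (or an equivalent) finiteness theorem, your proposal does not yield the statement.
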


The decomposition in Theorem \ref{thm-decomposition} is usually referred to as a \emph{minimal decomposition} and the invariant subsets $\mathcal{M}_i$ are called {\em minimal components }.

Even though we have a general decomposition theorem, it is difficult to describe  the structure of  a concrete polynomial. The exact minimal decomposition of quadratic  polynomials on $\mathbb{Z}_2$ \cite{FL11} and square mapping on $\Zp$ (for all primes $p$) \cite{FanSLiao} have been investigated. 
In this note, 
we obtain the detailed minimal decomposition of Chebyshev polynomials $T_{m}$ on $\mathbb{Z}_2$.

Since $T_1(x)=x$ is trivial, we only study $T_{m}$ for $m\geq 2$. As will be shown in Theorem \ref{decomposition}, the dynamical structure  of $T_m$ will be  simpler when $m$ is even. Our main task is to study the dynamical system $T_m$ when $m$ is odd.
Note that for any odd number $m\geq 3$, there exists a unique integer $s=s(m)\geq 2$ such that $m=2^sq+1$ or $m=2^sq-1$ with $q\geq 1$ being another odd number. In fact, 
\begin{align}\label{def-s}
s(m):=\max\{n\geq 2: 2^n| (m+1) \ \text{or} \  2^n| (m-1)\}.
\end{align}
 Then we can decompose the set of odd positive integers at least $3$ as
    $$\{m=2k+1: k\geq 1,k\in \mathbb{Z}\}=\bigsqcup_{s\geq2}(\bigsqcup_{q~\text{is odd}}\{2^{s}q\pm 1\}).$$
The following main result shows an exact minimal decomposition of the Chebyshev polynomials. It turns out that in the case that $m\geq 3$ is an odd number, the decomposition depends only on the number $s(m)$. 
\begin{theorem}\label{decomposition}
Let $T_{m} (m\geq 2)$ be a Chebyshev polynomial defined as in  the formula {\rm (\ref{def-poly})}.\\
\indent {\rm (i)} \ If $m$ is even, then $1$ is an attracting fixed point of $T_{m}$, and all other points lie in the attracting basin of $1$.\\
\indent {\rm(ii)} \  If $m$ is odd with $s=s(m)\geq 2$ being defined in (\ref{def-s}), 
then 
     $\mathbb{Z}_{2}$ is decomposed as 
    $$\mathbb{Z}_{2}=\{0,1,-1\}\bigsqcup( E_{1}\bigsqcup  E_{2}\bigsqcup E_{3}),$$
    where \begin{align*}
           E_{1} &= \bigsqcup_{n\ge 1}\bigsqcup_{0\leq i < 2^{s-1}}E_1(n,i),\\
           E_{2} &= \bigsqcup_{n\ge 2}\bigsqcup_{0\leq i < 2^{s}}E_2(n,i),\\
           E_{3} &= \bigsqcup_{n\ge 2}\bigsqcup_{0\leq i < 2^{s}}E_3(n,i),
      \end{align*}
with
\begin{align*}
E_1(n, i)&:= 2^{n}(1+2i)+2^{n+s}\mathbb{Z}_{2}\ (n\geq 1, 0\leq i < 2^{s-1}),\\
E_2(n,i)&:= 1+2^{n}(1+2i)+2^{n+s+1}\mathbb{Z}_{2} \ (n\geq 2, 0\leq i < 2^{s}),\\
E_3(n,i)&:=-1+2^{n}(1+2i)+2^{n+s+1}\mathbb{Z}_{2} \ (n\geq 2, 0\leq i < 2^{s})
\end{align*}
being the minimal components of $T_{m}$.
\end{theorem}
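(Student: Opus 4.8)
My plan is to reduce everything to the behaviour of $T_m$ on the ``shells'' around its three fixed points, exploiting the semigroup law $T_m\circ T_n=T_{mn}$ (so that $T_m^{\,k}=T_{m^k}$), the oddness $T_m(-x)=-T_m(x)$ for odd $m$, and the two elementary congruences $T_m(x)\equiv x\pmod 2$ for $m$ odd and $T_m(x)\equiv 1\pmod 2$ for $m$ even (both immediate from the recursion $T_k=2xT_{k-1}-T_{k-2}$). The even case (i) is then quick: every point lands in $1+2\mathbb{Z}_2$ after one step, and there the expansion $T_m(x)-1=m^2(x-1)+O((x-1)^2)$ together with $v_2(m^2)\ge 2$ forces $v_2(T_m(x)-1)>v_2(x-1)$, so the orbit converges to the attracting fixed point $1$. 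For the odd case I first record the two arithmetic facts that drive the whole picture: if $\epsilon:=\mathrm{sign}\,T_m'(0)\in\{\pm1\}$ then $\epsilon m\equiv 1\pmod{2^{s}}$ with $v_2(\epsilon m-1)=s$, while the multiplier at $\pm1$ satisfies $v_2(m^2-1)=v_2(m-1)+v_2(m+1)=s+1$. I also reduce the $E_3$ part to the $E_2$ part: since $x\mapsto -x$ commutes with $T_m$, it carries the balls $E_2(n,i)$ bijectively onto the balls $E_3(n,i')$ and conjugates the dynamics, so $E_3(n,i')$ is a minimal component exactly when $E_2(n,i)$ is.

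Next I verify the set-theoretic decomposition and the invariance of each listed ball. Sorting $\mathbb{Z}_2\setminus\{0,\pm1\}$ by $v_2(x)$, $v_2(x-1)$ and $v_2(x+1)$ shows that $\{0,1,-1\}\sqcup E_1\sqcup E_2\sqcup E_3$ is all of $\mathbb{Z}_2$, with $E_1(n,\cdot)$ exhausting the sphere $v_2(x)=n$ in $2^{s-1}$ balls, $E_2(n,\cdot)$ the sphere $v_2(x-1)=n\ (n\ge 2)$ in $2^{s}$ balls, and $E_3(n,\cdot)$ the sphere $v_2(x+1)=n\ (n\ge 2)$ symmetrically. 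For invariance near $0$ I use that $T_m$ is odd, so $T_m(x)=\epsilon m\,x+a_3x^3+\cdots$; since $\epsilon m\equiv1\pmod{2^{s}}$, the residue of $x/2^{n}$ modulo $2^{s}$ — which is exactly the label distinguishing the balls $E_1(n,i)$ — is preserved, giving $T_m(E_1(n,i))\subseteq E_1(n,i)$. Near $1$ the same role is played by $m^2\equiv1\pmod{2^{s+1}}$ together with $T_m(x)-1=m^2(x-1)+\cdots$, which preserves the residue of $(x-1)/2^{n}$ modulo $2^{s+1}$ and hence each $E_2(n,i)$. In each case I then pass to the rescaled coordinate $v\in\mathbb{Z}_2$ in which the ball becomes all of $\mathbb{Z}_2$ and write the induced map as $g(v)=v+c(v)$ with $c$ a $1$-Lipschitz map.

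The core is to show that $g$ is minimal, i.e. that $c(v)$ is the increment of an odometer. Near $0$ the leading contribution to $T_m(x)-x$ is $(\epsilon m-1)x$, of valuation exactly $n+s$ with odd cofactor; in the coordinate $v$ this reads $c(v)=c_0+c_1v+\cdots$ with $c_0$ odd and $v_2(c_1)\ge s$, provided the nonlinear corrections $a_{2l+1}x^{2l+1}$ are strictly subdominant. For $l=1$ this is the exact identity $a_3=\pm m(m^2-1)/6$, giving $v_2(a_3)=s$ and $v_2(a_3x^3)=s+3n>n+s$; establishing $v_2(a_{2l+1}x^{2l+1})>n+s$ for all $l\ge2$ — a valuation bookkeeping for the Taylor coefficients of $T_m$ at $0$ — is the main technical obstacle. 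Near $1$ the cleanest route is the identity $T_m(x)^2-1=(x^2-1)U_{m-1}(x)^2$, where $U$ is the Chebyshev polynomial of the second kind and $U_{m-1}(1)=m$: writing $x=1+z$ it yields $T_m(x)-1=z\cdot\mathrm{(unit)}$, the unit agreeing with $m^2$ up to terms of valuation $>s+1$ by $v_2(m^2-1)=s+1$, so the increment has valuation exactly $n+s+1$ with odd cofactor. Granting these estimates, on every listed ball the induced map is $v\mapsto v+c(v)$ with $c(v)$ odd for all $v$ and linear part divisible by $4$, and is therefore minimal by the $1$-Lipschitz minimality criterion of \cite{FL11}.

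Finally I assemble the pieces. Each ball $E_1(n,i)$, $E_2(n,i)$, $E_3(n,i)$ is invariant and carries a minimal $T_m$-action, hence is a single minimal component; the three families are pairwise disjoint and cover $\mathbb{Z}_2\setminus\{0,\pm1\}$; and $0,1,-1$ are the only periodic points. By Theorem \ref{thm-decomposition} this forces $\mathcal{P}=\{0,1,-1\}$, $\mathcal{M}=E_1\sqcup E_2\sqcup E_3$ and $\mathcal{B}=\varnothing$, which is precisely the asserted decomposition. I expect the genuinely delicate step to be the uniform (in both $n$ and $s$) control of the higher Taylor coefficients needed for minimality — equivalently, checking that the ``growing cycle'' condition of the minimality criterion holds at every level; the semigroup relation $T_m^{\,k}=T_{m^k}$, which lets one read off the action at level $N$ from $m^k\bmod 2^N$ and the order of $m$ in $(\mathbb{Z}/2^N)^\times$, is the tool I would use to treat all levels at once if the term-by-term estimates become unwieldy.
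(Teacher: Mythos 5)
Your skeleton is in fact the same as the paper's: show that on each listed ball the increment $T_m(x)-x$ has exact $2$-adic valuation $n+s$ (near $0$), respectively $n+s+1$ (near $\pm1$), check the derivative is $\equiv 1 \ ({\rm mod}\ 4)$, and then invoke the strong-growth machinery of \cite{FL11} (Lemma~\ref{grows} and Proposition~\ref{minimal-comp}). But the input that makes those valuation statements true is precisely what you do not prove: the $2$-adic valuations of the higher Chebyshev coefficients. Near $0$ you concede this yourself (``the main technical obstacle''): you need $v_2(c_3)=s$ and $v_2(c_{2l+1})\ge s+1$ for $l\ge 2$, which is exactly the paper's Lemma~\ref{ord2c}, proved there via a recursion relating $c_{2i+3}$ to $c_{2i+1}$ for $i<2^s-1$ and a direct binomial bound $v_2(c_{2i+1})\ge 2i$ for $i\ge 2^s-1$. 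Without it, the trivial bound $v_2(c_{2l+1}x^{2l+1})\ge (2l+1)n$ does not beat $n+s$ once $s$ is large, so you obtain neither the invariance of $E_1(n,i)$ nor strong growth at level $n+s$; a proof that defers its central estimate is not a proof.

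Moreover, your claim that the identity $T_m(x)^2-1=(x^2-1)U_{m-1}(x)^2$ settles the neighbourhood of $\pm1$ ``cleanly'' does not hold up. Writing $x=1+z$ with $v_2(z)=n\ge 2$ and $w=T_m(x)-1$, the identity gives
\begin{equation*}
(w-z)(w+z+2)=\bigl(U_{m-1}(1+z)^2-1\bigr)\,z(z+2),
\qquad\text{hence}\qquad
v_2(w-z)=v_2\bigl(U_{m-1}(1+z)^2-1\bigr)+n,
\end{equation*}
so the required exact value $n+s+1$ (the paper's (\ref{vbn})) forces you to prove $v_2\bigl(U_{m-1}(1+z)^2-1\bigr)=s+1$. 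But $U_{m-1}(1+z)^2-1=(m^2-1)+\bigl(U_{m-1}(1+z)^2-m^2\bigr)$, and all that comes for free is $v_2\bigl(U_{m-1}(1+z)^2-m^2\bigr)\ge n+1$, which is useless exactly when $n\le s$: you again need valuation control of the Taylor coefficients of $U_{m-1}$ at $1$, i.e.\ the same bookkeeping you were trying to avoid (the paper instead expands $T_m(1+2^nt)$ directly and feeds Lemma~\ref{ord2c} together with $\sum_i c_{2i+1}=1$ into the computation). Finally, the minimality criterion as you state it --- ``$c(v)$ odd for all $v$ and linear part divisible by $4$'' --- is false: $g(v)=v+1+2v^2$ satisfies both conditions yet is not transitive mod $4$, hence not minimal. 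What is true, and what the paper uses, is that the increment must be \emph{constant} mod $4$ (equivalently, the relevant cycle strongly grows, which then propagates by Lemma~\ref{grows}); in your setting that constancy again comes only from the missing coefficient estimates. So the architecture is right, the reduction of $E_3$ to $E_2$ by oddness and the even case are fine, but Theorem~\ref{decomposition} remains unproven until you supply Lemma~\ref{ord2c} or an equivalent estimate at one of the fixed points.
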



The Chebyshev polynomials are important examples in arithmetic dynamical systems. See for example, in pages 29-30, 41, 328-336, and 380-381 of the book \cite{SilvermanGTM241}. 
Recently in \cite{RSWZ} the authors studied  the  Chebyshev polynomials as  dynamical system on the finite fields $\mathbb{Z}/p\mathbb{Z}$ for general prime $p\geq 2$. In \cite{DS14} the periodic orbits of Chebyshev polynomials considered as dynamics on the field $\mathbb{C}_p$ of complex $p$-adic number were studied.

 We also point out that the prime $p=2$ behaves differently from other primes, and the minimal decomposition of the dynamics of Chebyshev polynomials on $\mathbb{Z}_p$ with $p\geq 3$ needs much more difficult calculations.

\bigskip
\section{Induced dynamics of Polynomials on $\mathbb{Z}/p^n \mathbb{Z}$
}\label{top-2-adic}

In this section we will recall the techniques used in \cite{FL11} and \cite{FanSLiao} for the minimal decomposition.  The idea of these techniques was originally from \cite{DZunpu}. It was later fully developed in \cite{FL11}. 

 Let $p\geq 2$ be an arbitrary prime number. Denote by  $\mathbb{Z}_p[x]$ the set of polynomials with coefficients in $\mathbb{Z}_p$.
 Let $f \in \mathbb{Z}_p[x]$ and let  $n\ge 1$ be a positive integer.  We denote   by $f_n$ the induced
map of $f$ on $\mathbb{Z}/p^n\mathbb{Z}$, i.e.,
$$f_n (x \ {\rm mod} \ p^n)=f (x)  \quad ({\rm mod} \ p^n).$$
 Many properties of the dynamics $f$ are linked to those of
$f_n$.  For a subset $E$ of $\Zp$, denote 
$$E/p^n\Zp:=\{x\in \Zp/p^n\Zp:\  \exists \ y\in E \text{ such that } x\equiv y  \quad ({\rm mod} \ p^n)\}.$$
The following lemma can be found in \cite[p.\,111]{Ana94}, \cite[Theorem 1.2]{Ana06} and \cite[Corollary 4]{CFF09}.
\begin{lemma}[\cite{Ana94, Ana06, CFF09}]\label{minimal-part-to-whole}
Let $f\in \mathbb{Z}_p[x]$ and $E\subset \mathbb{Z}_p$ be a compact
$f$-invariant set. Then $f: E\to E$ is minimal if and only if $f_n:
E/p^n\mathbb{Z}_p \to E/p^n\mathbb{Z}_p$ is minimal for each $n\ge
1$.
\end{lemma}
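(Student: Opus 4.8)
The plan is to exploit the inverse-limit structure $\mathbb{Z}_p=\varprojlim \mathbb{Z}/p^n\mathbb{Z}$ and the fact that the reduction maps $\pi_n\colon \mathbb{Z}_p\to \mathbb{Z}/p^n\mathbb{Z}$ intertwine the two dynamics. First I would record the elementary facts that make the statement sensible. Because $f\in\mathbb{Z}_p[x]$ is congruence-preserving (if $x\equiv y \pmod{p^n}$ then $f(x)\equiv f(y)\pmod{p^n}$), the induced map $f_n$ is well defined on $\mathbb{Z}/p^n\mathbb{Z}$; because $E$ is $f$-invariant, the finite set $E/p^n\mathbb{Z}_p$ is $f_n$-invariant. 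Moreover, by the very definition of $E/p^n\mathbb{Z}_p$, the map $\pi_n$ restricts to a continuous surjection $E\to E/p^n\mathbb{Z}_p$ that satisfies the semi-conjugacy $\pi_n\circ f=f_n\circ\pi_n$, so that $\pi_n(f^k(x))=f_n^k(\pi_n(x))$ for all $k\ge 0$.

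For the direction $(\Rightarrow)$, assume $f\colon E\to E$ is minimal and fix $n$ together with a point $\bar x\in E/p^n\mathbb{Z}_p$. Choose a lift $x\in E$ with $\pi_n(x)=\bar x$; then the forward orbit $A=\{f^k(x)\}_{k\ge 0}$ is dense in $E$, i.e. $\overline A=E$. Continuity of $\pi_n$ gives $\pi_n(\overline A)\subseteq\overline{\pi_n(A)}$, and since the target is finite and discrete its subset $\pi_n(A)$ is already closed. Chaining these, $E/p^n\mathbb{Z}_p=\pi_n(E)=\pi_n(\overline A)\subseteq\overline{\pi_n(A)}=\pi_n(A)\subseteq E/p^n\mathbb{Z}_p$, so $\pi_n(A)=E/p^n\mathbb{Z}_p$. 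Thus the $f_n$-orbit of $\bar x$ fills the whole finite space, and since $\bar x$ was arbitrary, $f_n$ is minimal.

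For $(\Leftarrow)$, assume every $f_n$ is minimal and fix $x\in E$. The balls $y+p^n\mathbb{Z}_p$ form a neighborhood basis in $\mathbb{Z}_p$, so to prove the orbit of $x$ is dense in $E$ it suffices to show that for every $y\in E$ and every $n\ge 1$ there is a $k$ with $f^k(x)\equiv y\pmod{p^n}$. Translating through $\pi_n$, this is exactly the demand $f_n^k(\pi_n(x))=\pi_n(y)$, which the minimality of $f_n$ on the finite set $E/p^n\mathbb{Z}_p$ supplies; invariance of $E$ guarantees $f^k(x)\in E$, so the orbit genuinely meets $(y+p^n\mathbb{Z}_p)\cap E$. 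Hence every $y\in E$ lies in the orbit closure, and as $E$ is closed the orbit cannot accumulate outside it, so the orbit closure is exactly $E$ and $f$ is minimal.

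The argument is mostly bookkeeping once the semi-conjugacy is in hand, and I do not expect a genuine obstacle. The conceptual heart—what one must get right—is the translation between the two notions of density: the congruence classes modulo $p^n$ are precisely the basic clopen sets of $\mathbb{Z}_p$, so hitting every class at every level $n$ is literally topological density, and minimality on each finite quotient means the induced permutation is a single cycle. The compactness hypothesis enters only mildly, to ensure $E$ is closed so that the closure taken in the backward direction stays inside $E$; the well-definedness of each $f_n$, which rests on $f$ being congruence-preserving, is the only foundational point that must be checked before the rest goes through.
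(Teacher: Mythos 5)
Your proof is correct and complete. There is, however, nothing in the paper to compare it against: the paper states this lemma as a quoted result from \cite{Ana94,Ana06,CFF09} and gives no proof of it. Your argument --- setting up the semi-conjugacy $\pi_n\circ f=f_n\circ\pi_n$ (justified by the $1$-Lipschitz, congruence-preserving property of polynomials over $\mathbb{Z}_p$), using finiteness and discreteness of the quotients for the direction $(\Rightarrow)$, and using the fact that the congruence classes modulo $p^n$ form a clopen neighborhood basis for the direction $(\Leftarrow)$ --- is precisely the standard inverse-limit argument underlying the cited references, and you handle the two points where care is needed: every class in $E/p^n\mathbb{Z}_p$ has a lift in $E$ by definition of that set, and compactness of $E$ ensures the orbit closure stays inside $E$.
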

It is clear that if $f_n: E/p^n\mathbb{Z}_p \to E/p^n\mathbb{Z}_p$
is minimal, then so is $$f_m: E/p^m\mathbb{Z}_p \to E/p^m\mathbb{Z}_p \text{ for each }1\le  m <n.$$ Therefore, Lemma \ref{minimal-part-to-whole} shows that to obtain the minimality of $E$,
it is important to investigate under what condition, the minimality
of $f_{n}$ implies that of $f_{n+1}$.

Let $\sigma=(x_1, \cdots, x_k) \subset
\mathbb{Z}/p^n\mathbb{Z}$ be a {\it cycle} of $f_n$ of length $k$ (also
called  a {\it $k$-cycle}), i.e.,
$$ f_n(x_1)=x_2, \cdots, f_n(x_i)=x_{i+1}, \cdots, f_n(x_k)=x_1.$$
In this case we also say $\sigma$ is {\it at level $n$}. Let
$$ X_{\sigma}:=\bigsqcup_{i=1}^k  X_i \ \ \mbox{\rm where}\ \
X_i:=\{ x_i +p^nt+p^{n+1}\mathbb{Z}; \ t=0, \cdots,p-1\} \subset
\mathbb{Z}/p^{n+1}\mathbb{Z}.$$
 Then
\[
f_{n+1}(X_i) \subset X_{i+1}  \ (1\leq i \leq k-1) \ \ \mbox{\rm
and}\ \  f_{n+1}(X_k) \subset X_1.\]

In the following we shall study the behavior of the finite dynamics
$f_{n+1}$ on the $f_{n+1}$-invariant set $X$ and determine all
the cycles in $X$ of $f_{n+1}$, which will be  called  {\it lifts} of
$\sigma$ (from level $n$ to level $n+1$). Remark that the length of a lift of
$\sigma$ is a multiple of $k$.

 Let $g:=f^k$ be the $k$-th iterate of $f$. Then any point in $\sigma$
 is fixed by $g_n$, the $n$-th induced map of $g$.
Let $$\mathbb{X}_i:=x_{i}+p^{n}\Zp =\{x\in \Zp: x\equiv x_i \ ({\rm mod} \ p^n)\}$$ be the closed disk of radius $p^{-n}$  corresponding  to $x_i\in \sigma$ and
  $\mathbb{X}_{\sigma}:=\bigsqcup_{i=1}^{k} \mathbb{X}_i$ be the clopen set corresponding to the cycle $\sigma$.
For $x\in \mathbb{X}_{\sigma}$, denote
\begin{eqnarray}
& &a_n(x):=g'(x)=\prod_{j=0}^{k-1} f'(f^j(x)) \label{def-an} \\
& &b_n(x):=\frac{g(x)-x}{p^n}=\frac{f^k(x)-x}{p^n}.\label{def-bn}
\end{eqnarray}
The values of the functions
$a_n$ and $b_n$ are important for our purpose. They define an affine
map
$$\Phi(x,t)=b_n(x)+a_n(x) t \qquad (x \in \mathbb{X}_{\sigma}, t \in \{0, \cdots,p-1\}).$$
The $1$-order Taylor expansion of $g$ at $x$ implies that for $0\leq t \leq p-1$,
\begin{eqnarray}\label{linearization}
 g(x+p^n t) \equiv x+p^n b_n(x) + p^n a_n(x) t
 \equiv x + p^n \Phi(x,t) \quad ({\rm mod} \ p^{2n}).
 \end{eqnarray}
 The function $\Phi(x,\cdot)$ is usually considered as an induced function from $\mathbb{Z}/p\mathbb{Z}$ to
 $\mathbb{Z}/p\mathbb{Z}$ by taking ${\rm mod} \ p$. We will keep the notation $\Phi(x,\cdot)$ if there is no confusion.
Then the formula (\ref{linearization}) implies that $g_{n+1}:
X_i \to X_i$ is conjugate to the linear map $$ \Phi(x_i, \cdot):
\mathbb{Z}/p\mathbb{Z} \to \mathbb{Z}/p\mathbb{Z}.
$$
Thus $ \Phi(x_i, \cdot)$ is a {\em linearization} of $g_{n+1}: X_i \to X_i$.

As proved in Lemma 1 of \cite{FL11}, the coefficient $a_n(x)$ (mod $p$) is always constant on
$\X_i$ and the coefficient $b_n(x)$ (mod $p$) is also constant on
$\X_i$ but under the condition $a_n(x)\equiv 1$ (mod $p$).
For simplicity, sometimes we write $a_n$ and $b_n$
without mentioning $x$.  

In this note, we study the dynamical systems of Chebyshev Polynomials on $\mathbb{Z}_2$.  So we assume  $p=2$  in the reminder of this section.
For the general prime $p$, we refer the  readers to  \cite{FL11,FanSLiao}.

%
%
%

From the values of $a_n$ and $b_n$, one can predict the behaviors of $f_{n+1}$ on $X_{\sigma}$, by the linearity of the map $\Phi=\Phi(x, \cdot)$:\\
 \indent {\rm (a)} If $a_n \equiv 1 \ ({\rm mod} \ 2)$ and
 $b_n \not\equiv 0 \ ({\rm mod} \ 2)$, then $\Phi$ preserves a single cycle of length $2$. So 
 $f_{n+1}$ restricted to $X_{\sigma}$ preserves a single cycle of length $2k$. In this case we say $\sigma$ {\it grows}.
Furthermore, we say $\sigma$ {\it strongly grows} if $a_n \equiv 1 \ ({\rm mod} \
4)$ and $b_n \equiv 1 \ ({\rm mod} \ 2)$,  and $\sigma$ {\it weakly
grows} if $a_n \equiv 3 \ ({\rm mod} \ 4)$ and $b_n \equiv 1 \ ({\rm
mod} \ 2)$. \\
 \indent {\rm (b)} If $a_n \equiv 1 \ ({\rm mod} \ 2)$ and
 $b_n \equiv 0 \ ({\rm mod} \ p)$, then $\Phi$ is the identity. So $f_{n+1}$ restricted to $X_{\sigma}$
 preserves
 $p$ cycles  of length $k$. In this case we say $\sigma$ {\it splits}. Furthermore, we say $\sigma$ {\it strongly splits} if $a_n \equiv 1 \
({\rm mod} \ 4)$ and $b_n \equiv 0 \ ({\rm mod} \ 2)$, and $\sigma$
{\it weakly splits} if $a_n \equiv 3 \ ({\rm mod} \ 4)$ and $b_n
\equiv 0 \ ({\rm mod} \ 2)$.\\
 \indent {\rm (c)} If $a_n \equiv 0 \ ({\rm mod} \ 2)$, then $\Phi$ is constant. So $f_{n+1}$ restricted to $X_{\sigma}$
 preserves
 one cycle of length $k$ and  the remaining points of $X_{\sigma}$ are mapped into this cycle.
  In this case we say $\sigma$ {\it grows tails}. \\
Sometimes, the behavior of cycles will be inherited. For example, we have the following lemma.
\begin{lemma}[\cite{DZunpu}, see also \cite{FL11}, Proposition 1]\label{grows-tails}
If a cycle grows tails at a certain level, then its single lift
also grows tails.
\end{lemma}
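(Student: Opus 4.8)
The plan is to unwind the definition of growing tails into a condition on the multiplier $a_n$, and then observe that this multiplier, read modulo $2$, is unchanged when we pass from a cycle to its single lift.

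Recall from case (c) above that a $k$-cycle $\sigma=(x_1,\dots,x_k)$ at level $n$ grows tails precisely when $a_n\equiv 0\ ({\rm mod}\ 2)$, where $a_n(x)=g'(x)=\prod_{j=0}^{k-1}f'(f^j(x))$ with $g=f^k$. Assume $\sigma$ grows tails. By the analysis of case (c), the map $\Phi(x_i,\cdot)$ is constant, so $f_{n+1}$ preserves inside $X_\sigma$ a single cycle whose length is again $k$; this is the single lift $\sigma'$, and each of its points $x_i'$ lies in the disk $\X_i=x_i+2^n\Zp$ associated to $x_i$. In particular $x_i'\equiv x_i\ ({\rm mod}\ 2^n)$.

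The heart of the argument is that the multiplier is computed from the \emph{same} iterate $g=f^k$ for $\sigma'$ as for $\sigma$, precisely because the lift has length $k$ and not a proper multiple of $k$. Hence $a_{n+1}(x_i')=g'(x_i')$, and since $g'\in\Zp[x]$ has $2$-adic integer coefficients, the congruence $x_i'\equiv x_i\ ({\rm mod}\ 2^n)$ with $n\ge 1$ forces $g'(x_i')\equiv g'(x_i)\ ({\rm mod}\ 2^n)$, and a fortiori modulo $2$. This is exactly the fact, already recorded from Lemma~1 of \cite{FL11}, that $a_n$ modulo $2$ is constant on each disk $\X_i$; the lift simply selects particular points of those disks.

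Putting these together gives $a_{n+1}(x_i')\equiv a_n(x_i)\equiv 0\ ({\rm mod}\ 2)$, so by case (c) the lift $\sigma'$ grows tails as well. There is no serious difficulty here; the only point requiring care — and the main (mild) obstacle — is to confirm that the single lift really has length exactly $k$, so that $a_{n+1}$ is the derivative of the identical iterate $g=f^k$ and the constancy-on-disks property can be reused verbatim. This length statement is supplied by case (c), where $\Phi$ constant yields exactly one $k$-cycle in $X_\sigma$.
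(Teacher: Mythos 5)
Your proof is correct: growing tails is precisely the condition $a_n\equiv 0\ ({\rm mod}\ 2)$, the single lift has the same length $k$, so its multiplier is the derivative of the same iterate $g=f^k$ evaluated at points lying in the same disks $\X_i$, and the mod-$2$ local constancy of $g'$ (a polynomial over $\Z2$) gives $a_{n+1}\equiv a_n\equiv 0\ ({\rm mod}\ 2)$. The paper itself does not reprove this lemma but cites \cite{DZunpu} and Proposition~1 of \cite{FL11}, and your argument is essentially the one given there.
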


As corollary of Lemma \ref{grows-tails}, one has the following proposition.
\begin{proposition}[\cite{FL11}, p.2123]\label{attracting-orbit}
If $\sigma=(x_1, \cdots, x_k)$ is a growing tails cycle at level $n$, then $f$ has a $k$-periodic orbit in the clopen set $\mathbb{X}_{\sigma}=\bigsqcup_{i=1}^k  x_i +2^{n}\mathbb{Z}_2$ with $\mathbb{X}_{\sigma}$ as its attracting basin.
\end{proposition}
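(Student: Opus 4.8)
The plan is to use the defining property of a growing-tails cycle, $a_n\equiv 0\pmod 2$, to show that the $k$-th iterate $g=f^k$ contracts each disk $\mathbb{X}_i=x_i+2^n\mathbb{Z}_2$ by a factor at least $1/2$, and then to invoke the Banach fixed point theorem on the complete space $\mathbb{X}_i$. First I would record the invariance: since $\sigma$ is a $k$-cycle of $f_n$, each $x_i$ is fixed by $g_n$, so $g(x)\equiv x_i\pmod{2^n}$ for every $x\in\mathbb{X}_i$; hence $g(\mathbb{X}_i)\subseteq\mathbb{X}_i$ and, cyclically, $f(\mathbb{X}_i)\subseteq\mathbb{X}_{i+1}$ with indices taken mod $k$. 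In particular $\mathbb{X}_\sigma$ is invariant under $f$.

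The heart of the argument is a uniform contraction estimate on $\mathbb{X}_i$. Because $\sigma$ grows tails we have $a_n(x)=g'(x)\equiv 0\pmod 2$, and by Lemma 1 of \cite{FL11} this residue mod $2$ is constant on $\mathbb{X}_i$; thus $|g'(y)|_2\le 1/2$ for every $y\in\mathbb{X}_i$. Expanding the polynomial $g$ around $y$,
\[
g(x)=g(y)+g'(y)(x-y)+\sum_{j\ge 2}\frac{g^{(j)}(y)}{j!}(x-y)^j,
\]
and using that the Hasse coefficients $g^{(j)}(y)/j!$ lie in $\mathbb{Z}_2$, I would bound the terms for $x,y\in\mathbb{X}_i$: the linear term satisfies $|g'(y)(x-y)|_2\le\frac12|x-y|_2$, while each higher term satisfies $\bigl|\tfrac{g^{(j)}(y)}{j!}(x-y)^j\bigr|_2\le|x-y|_2^2\le\frac12|x-y|_2$, the last inequality because $|x-y|_2\le 2^{-n}\le\frac12$. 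By the ultrametric inequality $|g(x)-g(y)|_2\le\frac12|x-y|_2$, so $g$ is a $1/2$-Lipschitz contraction of $\mathbb{X}_i$.

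With the contraction in hand the conclusion is essentially immediate. By Banach's theorem $g$ has a unique fixed point $\xi_i\in\mathbb{X}_i$, and $g^t(x)\to\xi_i$ for every $x\in\mathbb{X}_i$. Since $f$ maps $\mathbb{X}_i$ into $\mathbb{X}_{i+1}$ and commutes with $g$, the point $f(\xi_i)$ is a fixed point of $g$ in $\mathbb{X}_{i+1}$, hence equals $\xi_{i+1}$ by uniqueness; therefore $\{\xi_1,\dots,\xi_k\}$ is a genuine $k$-periodic orbit of $f$ contained in $\mathbb{X}_\sigma$. For the basin, any $x\in\mathbb{X}_i$ satisfies $f^{tk}(x)=g^t(x)\to\xi_i$, and applying $f^j$ together with the continuity of $f$ gives $f^{tk+j}(x)\to\xi_{i+j}$; thus the full orbit of every $x\in\mathbb{X}_\sigma$ is attracted to $\{\xi_1,\dots,\xi_k\}$, so every point of the clopen invariant set $\mathbb{X}_\sigma$ lies in the attracting basin, as claimed.

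The step I expect to demand the most care is the uniform contraction estimate: it is crucial that $g'$ vanish mod $2$ not merely at the cycle point but on the entire disk (exactly the constancy supplied by \cite{FL11}), and that the $2$-adic integrality of the Taylor/Hasse coefficients be used to absorb the higher-order terms. As an alternative that stays closer to the statement being a corollary of Lemma \ref{grows-tails}, one can argue combinatorially: the single lift grows tails at every level $m\ge n$, producing a nested sequence of $k$-cycles whose disks shrink to a single $k$-periodic orbit, while at each level the non-cycle points are mapped into the cycle, which yields the basin directly. I would present the contraction argument as the main line and note this combinatorial route as a cross-check.
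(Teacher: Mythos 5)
Your proof is correct, but your main line is genuinely different from the paper's. The paper obtains this proposition as a corollary of Lemma \ref{grows-tails} (citing \cite{FL11}, p.~2123): since growing tails is inherited by the single lift, at every level $m\ge n$ the set $\mathbb{X}_{\sigma}/2^{m}\mathbb{Z}_2$ carries exactly one $k$-cycle of $f_m$, into which all other points are eventually mapped; the nested sequence of these cycles shrinks to a $k$-periodic orbit of $f$, and the level-$m$ picture forces every orbit in $\mathbb{X}_{\sigma}$ to eventually stay within distance $2^{-m}$ of it --- precisely the combinatorial route you relegate to a cross-check. Your main argument is instead analytic: the growing-tails condition $a_n\equiv 0 \m2{}$ holds on each whole disk $\mathbb{X}_i$ (by the constancy of $a_n$ mod $2$ from Lemma 1 of \cite{FL11}), and combined with the $2$-adic integrality of the Hasse coefficients $g^{(j)}/j!$ this makes $g=f^k$ a $\frac{1}{2}$-Lipschitz contraction of each $\mathbb{X}_i$; Banach's fixed point theorem then delivers the periodic orbit and its basin in one stroke (your $\xi_i$ are indeed distinct, since the disks $\mathbb{X}_i$ are pairwise disjoint). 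The trade-off: your contraction argument is self-contained --- it bypasses the lift-inheritance Lemma \ref{grows-tails} entirely --- and yields in addition uniqueness of the periodic orbit in $\mathbb{X}_{\sigma}$ and a geometric rate of convergence; the paper's route requires no analytic estimate and stays inside the level-by-level cycle/lift machinery that it must develop anyway to handle the growing and splitting cases, which is why it is the natural choice there.
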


\medskip
 The case of growing tails has been checked in Proposition \ref{attracting-orbit}. For growing and splitting cases, we need do further investigations.
In fact, both of the cases of growing and splitting will be divided into two sub-cases.

The following lemma shows  that  the strong growing properties will be inherited.
\begin{lemma}[\cite{FL11}, Proposition 5]\label{grows}
Let $\sigma$ be a cycle of $f_n$ ($n\geq 2$). If $\sigma$ strongly
grows then the lift of ${\sigma}$ strongly grows.
\end{lemma}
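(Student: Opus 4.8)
The plan is to compute the level-$(n+1)$ data $a_{n+1},b_{n+1}$ of the lift directly in terms of the level-$n$ data $a_n,b_n$, and then read off the strong-growth congruences. Since $\sigma$ strongly grows it in particular grows, so by case (a) above its lift $\tilde\sigma$ is the single cycle of length $2k$ inside $X_\sigma$; the iterate attached to $\tilde\sigma$ is $\tilde g=f^{2k}=g\circ g$, where $g=f^{k}$. Fix a point $y$ of $\tilde\sigma$ and let $\mathbb{X}_i=x_i+2^{n}\mathbb{Z}_2$ be the disk containing it. Because each $x_i$ is fixed by $g_n$ and $f$ (hence $g$) is $1$-Lipschitz, $g(y)\equiv g(x_i)\equiv x_i \pmod{2^{n}}$, so $g(y)$ lies in the same disk $\mathbb{X}_i$ as $y$. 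This observation lets me keep the whole computation on a single disk.

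The two quantities then unwind cleanly. By the chain rule and the definition (\ref{def-an}),
\[
a_{n+1}(y)=(f^{2k})'(y)=g'(y)\,g'(g(y))=a_n(y)\,a_n(g(y)),
\]
while from (\ref{def-bn}), writing $f^{2k}(y)-y=\bigl(g(g(y))-g(y)\bigr)+\bigl(g(y)-y\bigr)=2^{n}\bigl(b_n(g(y))+b_n(y)\bigr)$, one gets
\[
b_{n+1}(y)=\frac{f^{2k}(y)-y}{2^{n+1}}=\frac{b_n(y)+b_n(g(y))}{2}.
\]
Thus the lemma reduces to controlling $a_n$ and $b_n$ at the two points $y,g(y)\in\mathbb{X}_i$ modulo $4$.

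The crux is a mod-$4$ rigidity of $a_n$ and $b_n$ on a single disk, and this is exactly where the hypotheses $n\geq 2$ and \emph{strong} growth are used. A first-order Taylor expansion of $g$ at $x_i$ gives, for $y=x_i+2^{n}u$, the congruences $a_n(y)=g'(y)\equiv g'(x_i)=a_n(x_i)\pmod{2^{n}}$ and $b_n(y)\equiv b_n(x_i)+u\,(a_n(x_i)-1)\pmod{2^{n}}$. Since $n\geq 2$ we have $4\mid 2^{n}$, and since $\sigma$ strongly grows we have $a_n(x_i)\equiv 1\pmod 4$, so $a_n(x_i)-1\equiv 0\pmod 4$; hence both $a_n$ and $b_n$ are constant modulo $4$ on $\mathbb{X}_i$, equal respectively to $a_n(x_i)\equiv 1$ and to the odd number $b_n(x_i)$. (For a merely growing, i.e. weakly growing, cycle one has $a_n(x_i)-1\equiv 2\pmod 4$, the term $u(a_n(x_i)-1)$ no longer vanishes mod $4$, and this step collapses; this is why the statement demands strong growth.)

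Substituting into the two displays finishes the argument. First, $a_{n+1}(y)\equiv 1\cdot 1=1\pmod 4$. Second, since $b_n(x_i)$ is odd, $b_n(y)+b_n(g(y))\equiv 2\,b_n(x_i)\equiv 2\pmod 4$, whence $b_{n+1}(y)\equiv 1\pmod 2$. These are precisely the defining congruences of strong growth at level $n+1$, so the lift $\tilde\sigma$ strongly grows. I expect the mod-$4$ constancy of $b_n$ on the disk to be the only real obstacle; once it is established the rest is bookkeeping, and the role of the hypothesis $n\geq 2$ is simply to kill the $2^{n}$ error terms modulo $4$.
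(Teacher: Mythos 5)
Your proof is correct. Note that the paper itself gives no proof of this lemma --- it is quoted verbatim from \cite{FL11} (Proposition 5) --- so there is nothing in-paper to compare against; but your argument is essentially the one used there: write $\tilde g=g\circ g$, get $a_{n+1}(y)=a_n(y)a_n(g(y))$ by the chain rule and $b_{n+1}(y)=\tfrac12\bigl(b_n(y)+b_n(g(y))\bigr)$ by telescoping, then control everything by a first-order Taylor expansion on a single disk. The only cosmetic difference is that \cite{FL11} phrases the key step as the relation $b_n(g(y))\equiv a_n(y)\,b_n(y)$ modulo a high power of $2$, whereas you phrase it as mod-$4$ constancy of $a_n$ and $b_n$ on $\mathbb{X}_i$ (valid since $n\geq 2$ and $a_n\equiv 1 \ ({\rm mod}\ 4)$); these are equivalent uses of the same Taylor estimate, and your version also cleanly isolates why weak growth fails and why $n\geq 2$ is needed.
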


Applying Lemma \ref{grows} consecutively, if $\tilde{\sigma}$ is the lift of $\sigma$, then $\tilde{\sigma}$ also strongly grows and again the lift of $\tilde{\sigma}$ also strongly grows and so on. In this case, we also say that the cycle $\sigma$ {\it strongly grows forever}.

Using Lemma \ref{grows}, we deduce that a strong growing cycle produces a minimal component.
\begin{proposition}\label{minimal-comp}
If $\sigma=(x_1, \cdots, x_k)$ is a strong growing cycle at level $n$, then $f$ restricted onto the invariant clopen set $\mathbb{X}_{\sigma}=\bigsqcup_{i=1}^k  x_i +p^{n}\mathbb{Z}_p$ is minimal.
\end{proposition}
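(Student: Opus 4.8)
The plan is to apply the criterion of Lemma \ref{minimal-part-to-whole} with $E=\mathbb{X}_{\sigma}$: the system $f\colon \mathbb{X}_{\sigma}\to \mathbb{X}_{\sigma}$ is minimal if and only if the induced finite map $f_m\colon \mathbb{X}_{\sigma}/2^m\mathbb{Z}_2\to \mathbb{X}_{\sigma}/2^m\mathbb{Z}_2$ is minimal for every $m\ge 1$. Since a finite dynamical system is minimal precisely when it is a single cycle through all of its points (on a finite set every orbit being dense forces one full cyclic permutation), the whole task reduces to showing that at each level $m\ge 1$ the map $f_m$ acts on $\mathbb{X}_{\sigma}/2^m\mathbb{Z}_2$ as one cycle exhausting the entire quotient.

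First I would treat the levels $m\ge n$ by induction, fueled by Lemma \ref{grows}. At level $n$ the quotient $\mathbb{X}_{\sigma}/2^n\mathbb{Z}_2$ is exactly the $k$-cycle $\sigma=(x_1,\dots,x_k)$. Because $\sigma$ strongly grows, the growing analysis recalled in case (a) shows that its lift to level $n+1$ is a single cycle of length $2k$; as each disk $x_i+2^n\mathbb{Z}_2$ splits into exactly two residues modulo $2^{n+1}$, this cycle has $2k=|\mathbb{X}_{\sigma}/2^{n+1}\mathbb{Z}_2|$ points and hence fills the whole quotient. Lemma \ref{grows} guarantees that this lift again strongly grows, so the argument repeats verbatim. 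By induction, for every $j\ge 0$ the map $f_{n+j}$ restricted to $\mathbb{X}_{\sigma}/2^{n+j}\mathbb{Z}_2$ is a single cycle of length $2^{j}k$, matching the cardinality $k\cdot 2^{j}$ of that quotient; thus $f_{n+j}$ is minimal there.

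It remains to handle the low levels $1\le m<n$, and here I would invoke the elementary remark recorded just after Lemma \ref{minimal-part-to-whole}: minimality of $f_n$ on $\mathbb{X}_{\sigma}/2^n\mathbb{Z}_2$ (already established, since that quotient is the single $k$-cycle $\sigma$) descends automatically to minimality of $f_m$ on $\mathbb{X}_{\sigma}/2^m\mathbb{Z}_2$ for each $m<n$. Combining the two ranges, $f_m$ is minimal at every level $m\ge 1$, and Lemma \ref{minimal-part-to-whole} then yields the minimality of $f$ on $\mathbb{X}_{\sigma}$. The only real content lies in the inductive step, and its difficulty is entirely absorbed by Lemma \ref{grows}: once strong growing is known to be inherited, the delicate point reduces to the cardinality check confirming that the unique length-$2^{j}k$ cycle occupies the whole quotient, leaving no room for tails or for extra cycles.
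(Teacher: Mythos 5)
Your proof is correct and follows essentially the same route as the paper: apply Lemma \ref{grows} to get that $\sigma$ strongly grows forever, conclude that $f_m$ is a single cycle (hence minimal) on $\mathbb{X}_{\sigma}/p^m\mathbb{Z}_p$ for every level, and invoke Lemma \ref{minimal-part-to-whole}. The only difference is that you spell out the details the paper leaves implicit -- the induction on levels, the cardinality check that the length-$2^{j}k$ cycle fills the quotient, and the descent of minimality to levels $m<n$ -- which is a faithful expansion rather than a different argument.
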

\begin{proof}
By Lemma  \ref{grows}, the cycle $\sigma$ strongly grows forever. Hence, $f_m$ is a single cycle (thus minimal) on $\mathbb{X}_\sigma/ p^m \mathbb{Z}_p$ for each $m\geq n$. Therefore, by Lemma  \ref{minimal-part-to-whole}, $f$ is minimal on $\mathbb{X}_\sigma$. 
\end{proof}

\bigskip
\section{Dynamics of Chebyshev mapping  on $\mathbb{Z}_{2}$}



Let $v_2(\cdot)$ be the $2$-adic valuation on $\mathbb{Z}_2$. We will first calculate the $2$-adic valuations of the coefficients of the Chebyshev polynomials.
\begin{lemma}\label{ord2c}
Let $m=2k+1\geq 3$ be an odd number and $$T_{m}(x)=\sum_{i\geq 0}^{k}c_{2i+1}x^{2i+1}$$
be the Chebyshev polynomial of degree $m$. Let   $s=\max\{v_{2}(m+1),v_{2}(m-1)\}$. Then $v_{2}(c_{3})=s$ and $v_{2}(c_{2i+1})\geq s+1$ for $1<i\leq k$.
\end{lemma}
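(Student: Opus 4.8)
The plan is to reduce each odd-degree coefficient to a single closed form and then read off its $2$-adic valuation. Writing $m=2k+1$, the monomial of degree $2i+1$ in (\ref{def-poly}) comes from the summation index $k-i$, so
\begin{align*}
c_{2i+1}=(-1)^{k-i}\frac{m}{k+i+1}\binom{k+i+1}{k-i}2^{2i}.
\end{align*}
Using the identity $\frac{1}{k+i+1}\binom{k+i+1}{k-i}=\frac{1}{2i+1}\binom{k+i}{2i}$, this collapses to
\begin{align*}
c_{2i+1}=(-1)^{k+i}\frac{m\,2^{2i}}{2i+1}\binom{k+i}{2i}.
\end{align*}
Since $m$ and $2i+1$ are odd, this already yields the clean expression $v_2(c_{2i+1})=2i+v_2\binom{k+i}{2i}$, which is the engine of the whole argument.

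Next I would dispose of the two elementary points. Put $M:=\max\{v_2(k),v_2(k+1)\}$; since $m-1=2k$ and $m+1=2(k+1)$, one has $s=1+M$. For $i=1$ the factor $\binom{k+1}{2}=\tfrac{1}{2}k(k+1)$ has valuation $v_2(k)+v_2(k+1)-1=M-1$ (one of $k,k+1$ being odd), so $v_2(c_3)=2+(M-1)=s$, as claimed. For $i\geq 2$ it then remains only to prove the lower bound $v_2\binom{k+i}{2i}\geq M+2-2i$, equivalently $v_2(c_{2i+1})\geq s+1$.

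For that estimate I would invoke Legendre's formula $v_2\binom{k+i}{2i}=S_2(i)+S_2(k-i)-S_2(k+i)$, where $S_2$ denotes the binary digit sum. When $i\geq 2^{M}$ the conclusion is immediate from $v_2(c_{2i+1})\geq 2i\geq 2^{M+1}\geq M+2$. For $2\leq i<2^{M}$ I would exploit that $2^{M}$ divides $k$ or $k+1$: the lowest $M$ binary digits of that number vanish, so adding or subtracting $i<2^{M}$ does not disturb its higher digits, and together with the elementary relation $S_2(2^{M}-t)=M-S_2(t-1)$ for $0<t\leq 2^{M}$ the three digit sums telescope to $v_2\binom{k+i}{2i}=M-1-S_2(i-1)$. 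Hence $v_2(c_{2i+1})=2i+M-1-S_2(i-1)$, and the target inequality reduces to $2i-S_2(i-1)\geq 3$ for $i\geq 2$, which holds because $S_2(i-1)\leq i-1\leq 2i-3$.

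The main obstacle is this last step: when $s$ (equivalently $M$) is large, the power $2^{2i}$ alone falls short of $s+1$, so one must genuinely extract the extra valuation hidden in $\binom{k+i}{2i}$. The delicate part is the digit-sum bookkeeping showing that, uniformly over the relevant range $2\leq i<2^{M}$, this valuation collapses to exactly $M-1-S_2(i-1)$, which is what makes the bound $v_2(c_{2i+1})\geq s+1$ come out cleanly (indeed with equality at $i=2$).
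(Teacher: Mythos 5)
Your proof is correct, but it runs on a genuinely different engine than the paper's. Both arguments in fact reach the same closed form: after splitting into the two cases $m=2^{s}q+1$ and $m=2^{s}q-1$, the paper also rewrites the coefficient as $c_{2i+1}=\pm\frac{m}{2i+1}\binom{2^{s-1}q+i}{2i}2^{2i}$ (resp.\ $\binom{2^{s-1}q+i-1}{2i}$), which is exactly your uniform expression $\pm\frac{m}{2i+1}\binom{k+i}{2i}2^{2i}$; and both arguments kill the range of large $i$ with the crude bound $v_2(c_{2i+1})\ge 2i$. The divergence is in the intermediate range $2\le i<2^{M}$ (the paper's $i<2^{s}-1$): the paper argues by induction on $i$, computing the ratio
$$
\frac{c_{2i+3}}{c_{2i+1}}=-\frac{(2^{s-1}q+i+1)(2^{s-1}q-i)}{(i+1)(2i+3)}\cdot 2
$$
and checking that each step gains at least one factor of $2$, which yields only an inequality; you instead invoke Legendre's formula and compute $v_2\binom{k+i}{2i}=M-1-S_2(i-1)$ exactly, hence the exact value $v_2(c_{2i+1})=2i+M-1-S_2(i-1)$. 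Your route buys uniformity (no case split on the parity of $k$) and sharper information (exact valuations in that range, showing the bound $s+1$ is attained at $i=2$ when $s\ge 3$), at the price of digit-sum bookkeeping; the paper's recursion is more elementary and needs no combinatorics of binary digits. One phrase in your sketch should be tightened: subtracting $i<2^{M}$ from the multiple of $2^{M}$ \emph{does} disturb the higher digits through a borrow --- if $k=2^{M}u$ with $u$ odd, then $k-i=2^{M}(u-1)+(2^{M}-i)$, so the top block drops from $u$ to $u-1$ --- and the telescoping of the three digit sums uses the cancellation $S_2(u)-S_2(u-1)=1$ valid for odd $u$. With that borrow accounted for, your formula $v_2\binom{k+i}{2i}=M-1-S_2(i-1)$ does hold in both cases $2^{M}\mid k$ and $2^{M}\mid(k+1)$, and the rest of your argument goes through.
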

\begin{proof}
We distinguish two cases:\\
\indent {\rm Case 1).}  If $k$ is an even integer, then
$v_{2}(m+1)=v_{2}(2k+2)=v_{2}(2)+v_{2}(k+1)=1$ and $v_{2}(m-1)=v_{2}(2k)\geq 2$. Let $s=v_{2}(m-1)\geq 2$.
Write $m=2^{s}q+1$ for some odd integer $q$.
Then \begin{align*}
       T_{m}(x) &= \sum_{i=0}^{2^{s-1}q}(-1)^{i}\frac{m}{m-i}{ m-i \choose i }2^{m-2i-1}x^{m-2i}.\\
       \end{align*}
Making a change of variables $i=2^{s-1}q-j$, we have          
       \begin{align*}
           T_{m}(x)       &= \sum_{i=0}^{2^{s-1}q}(-1)^{i}\frac{2^{s}q+1}{ 2^{s-1}q+i+1}{2^{s-1}q+i+1 \choose 2i+1 }
                2^{2i}x^{2i+1}.
     \end{align*}
So $$c_{2i+1}=(-1)^{i}\frac{2^{s}q+1}{ 2^{s-1}q+i+1}{2^{s-1}q+i+1 \choose 2i+1}2^{2i}, \text{ for }  0\leq i \leq k.$$ Thus
$$v_{2}(c_{3})=v_{2}\left(-\frac{(2^{s}q+1)(2^{s-1}q+1)2^{s-1}q}{3!}\cdot 2^{2}\right)=s-1-1+2=s.$$
Furthermore, 
\begin{align*}
c_{2i+3}&=(-1)^{i+1}\frac{2^{s}q+1}{ 2^{s-1}q+i+2}{2^{s-1}q+i+2 \choose2i+3}2^{2i+2}\\
       &=(-1)^{i+1}\frac{(2^{s}q+1)(2^{s-1}q+i+1)!}{(2i+3)!(2^{s-1}q-i-1)!}\cdot 2^{2i+2}     \\
       &= -\frac{(2^{s-1}q+i+1)(2^{s-1}q-i)}{(2i+2)(2i+3)} \cdot 2^{2}\cdot c_{2i+1}\\
       &= -\frac{(2^{s-1}q+i+1)(2^{s-1}q-i)}{(i+1)(2i+3)} \cdot 2\cdot c_{2i+1}.
\end{align*}

If $1\leq i<\min\{2^{s}-1,k-1\}$, then
  $$v_{2}(i+1)\leq s-1$$ and  $$v_{2}(2^{s-1}q+i+1)\geq \min\{v_{2}(2^{s-1}q),v_{2}(i+1)\}=v_{2}(i+1).$$
 Observing  $v_{2}(2i+3)=0$, we obtain
  \begin{align}
  v_{2}(c_{2i+3}) &= v_{2}(c_{2i+1})+ v_{2}(2^{s-1}q+1+i)+v_{2}(2^{s-1}q-i)-v_{2}(i+1)+1 \nonumber\\
                   &\geq v_{2}(c_{2i+1})+1 . \label{vci}
\end{align}

 If $2^{s}-1\leq i\leq k$, we claim 
    $$v_{2}(c_{2i+1})\geq 2i.$$
   In fact,  since \begin{align*}
            c_{2i+1}&= (-1)^{i}\frac{2^{s}q+1}{ 2^{s-1}q+1+i}{2^{s-1}q+1+i \choose 2i+1 }2^{2i}= (-1)^{i}\frac{2^{s}q+1}{2i+1}{2^{s-1}q+i \choose 2i}2^{2i},
          \end{align*}
   we have   $$v_{2}(c_{2i+1})= v_{2}\left({2^{s-1}q+i \choose 2i}\right)+2i\geq 2i.$$
  Noting $i\geq 2^{s}-1$ and $s\geq 2$,  we get
   \begin{equation}\label{vci2}
   v_{2}(c_{2i+1})\geq 2i=2^{s+1}-2 >s+2.
   \end{equation}
By combining inequalities (\ref{vci}) and (\ref{vci2}), we have $v_{2}(c_{2i+1})\geq s+1$ for $1<i\leq k$. \\
 
\indent {\rm  Case 2).}  If $k$ is an odd number, then
$v_{2}(m+1)=v_{2}(2k+2)=v_{2}(2)+v_{2}(k+1)\geq 2$ and $v_{2}(m-1)=v_{2}(2k)=1$. Let  $s=v_{2}(m+1)\geq 2$.
Write  $m=2^{s}q-1$  with $q$ being  an odd integer.
Then by change of variables \begin{align*}
       T_{m}(x) &= \sum_{i=0}^{\lfloor m/2 \rfloor}(-1)^{i}\frac{m}{m-i}{m-i\choose i}
                                                     2^{m-2i-1}x^{m-2i} \\
                &= \sum_{i=0}^{2^{s-1}q-1}(-1)^{i+1}\frac{2^{s}q-1}{ 2^{s-1}q+i}{2^{s-1}q+i\choose 2i+1}2^{2i}x^{2i+1}.
     \end{align*}
Thus $$c_{2i+1}=(-1)^{i+1}\frac{2^{s}q-1}{ 2^{s-1}q+i}{2^{s-1}q+i\choose 2i+1 }2^{2i}, \text{ for } 0\leq i \leq k.$$
As in Csae 1),  we obtain
$$v_{2}(c_{3})=v_{2}\left(\frac{(2^{s}q-1)(2^{s-1}q)(2^{s-1}q-1)}{3!}\cdot 2^{2}\right)=s-1-1+2=s.$$
Furthermore, the relation between $c_{2i+3}$ and $c_{2i+1}$ is follows:
\begin{align*}
c_{2i+3}&=(-1)^{i+2}\frac{2^{s}q-1}{ 2^{s-1}q+i+1}{2^{s-1}q+i+1\choose 2i+3}2^{2i+2}\\
       &= (-1)^{i+2}\frac{(2^{s}q-1)(2^{s-1}q+i)!}{(2i+3)!(2^{s-1}q-i-2)!}\cdot 2^{2i+2}     \\
       &= -\frac{(2^{s-1}q+i)(2^{s-1}q-i-1)}{(2i+2)(2i+3)} \cdot 2^{2}\cdot c_{2i+1}\\
       &= -\frac{(2^{s-1}q+i)(2^{s-1}q-i-1)}{(i+1)(2i+3)} \cdot 2\cdot c_{2i+1}.
\end{align*}
 If $1\leq i<\min\{2^{s}-1,k-1\}$, then
  $$v_{2}(i+1)\leq s-1$$ and  $$v_{2}(2^{s-1}q-i-1)\geq \min\{v_{2}(2^{s-1}q),v_{2}(i+1)\}=v_{2}(i+1).$$
   Notice that  $v_{2}(2i+3)=0$.
Then
  \begin{align}
  v_{2}(c_{2i+3}) &= v_{2}(c_{2i+1})+ v_{2}(2^{s-1}q+i)+v_{2}(2^{s-1}q-i-1)-v_{2}(i+1)+1 \nonumber\\
                   &\geq v_{2}(c_{2i+1})+1. \label{vciodd}
\end{align}
Now,  we claim that
    $$v_{2}(c_{2i+1})\geq 2i, \ \text{ if }  2^{s}-1\leq i\leq k-1.$$
In fact, by observing  \begin{align*}
            c_{2i+1}= (-1)^{i}\frac{2^{s}q+1}{ 2^{s-1}q+i}{2^{s-1}q+i\choose 2i+1}2^{2i}
                    = (-1)^{i}\frac{2^{s}q+1}{2i+1}{2^{s-1}q+i-1\choose 2i }2^{2i},
          \end{align*}
   we obtain  $$v_{2}(c_{2i+1})= v_{2}\left({2^{s-1}q+i-1\choose 2i}\right)+2i\geq 2i.$$
   Since $i\geq 2^{s}-1$ and $s\geq 2$,  we have 
   \begin{equation}\label{vci2odd}
   v_{2}(c_{2i+1})\geq 2i=2^{s+1}-2 >s+2.
   \end{equation}
Therefore, by inequalities (\ref{vciodd}) and (\ref{vci2odd}), we conclude that $v_{2}(2i+1)\geq s+1$ for $1<i\leq k$. \\
\end{proof}

Now we can give the proof of our main theorem.
\begin{proof}[Proof of Theorem \ref{decomposition}]
 {\rm (1)} If $m$ is even, then $T_{m}$ induces the constant map $1$ on $\mathbb{Z}_{2}/2\mathbb{Z}_{2}$. Thus at level $1$, the point $0$ is sent to the point $1$ and $\sigma=(1)\subset \mathbb{Z}_{2}/2\mathbb{Z}_{2}$ is a fixed point. Furthermore, we can check  $$a_{1}(1)=(T_m)^{\prime}(1)\equiv0 \ ({\rm mod} \ 2).$$
Hence $\sigma$ grows tails. By Lemma \ref{grows-tails}, the single lift $\tilde{\sigma}$ of $\sigma$ grows tails. Therefore, $1$ is a fixed point, and all other points of $1+2\mathbb{Z}_2$ lie in the attracting basin of $1$. Since $T_m(2\mathbb{Z}_2)\subset 1+2\mathbb{Z}_2$,  we have $$\lim_{k\rightarrow \infty}T_{m}^k(x) =\lim_{k\rightarrow \infty}T_{m^k}(x)=1\quad \forall x\in \mathbb{Z}_{2}.$$

{\rm (2)} Assume that $m$ is odd. It is easy to check that $0,1,-1$ are fixed points of $T_{m}$. 
By Lemma \ref{ord2c}, it can be checked that $T^{\prime}_{m}(x)\equiv1 \ ({\rm mod} \ 4)$ for any $x\in \mathbb{Z}_2$. 
Thus a cycle $\sigma$ with length $1$ either strongly grows or strongly splits. Let $s=s(m)\geq 2$ be defined in (\ref{def-s}). Then $m=2^sq+1$ or $m=2^sq-1$ for some odd number $q\geq 1$. Since the treatments for both of the two cases are the same, and the minimal decompositions are also the same, we will omit the proof of the case $m=2^sq-1$ and we suppose that 
  $m=2^{s}q+1$, for some odd number $q\geq 1$. We will give the minimal decomposition on $2\mathbb{Z}_{2}$ and on $\pm 1+4\mathbb{Z}_{2}$ separately. Then the minimal decomposition of the whole space $\mathbb{Z}_{2}$ will be obtained directly. 
  
\smallskip 

  \noindent\textbf{Decomposition of} $2\mathbb{Z}_{2}$: Since $0$ is a fixed point, for $n\geq 1$, $\sigma=(0) \subset \mathbb{Z} / 2^{n} \mathbb{Z} $ is a cycle of length $1$ of $(T_{m})_n$ at level $n$. Hence $\sigma$ strongly grows or strongly splits. However, by  definition (\ref{def-bn}) of $b_n$, 
  $$\ b_{n}(0)\equiv 0 \m2{}.$$ So $\sigma$ strongly  splits. Thus $\sigma_{1}=(0)$ and $\sigma_{2}=(2^{n})\subset \mathbb{Z}/2^{n+1}\mathbb{Z}$ are two lifts of  $\sigma$ at level $n+1$. It can be checked  that $\sigma_{1}$ splits.
 Let $x_{0}= 2^{n}+t 2^{n+1}$  for some $t \in \Z2.$  Then 
  \begin{align}
  T_{m}(x_{0})-x_{0} &=(c_{1}-1)x_{0}+\sum_{i=1}^{2^{s-1}q}c_{2i+1}x_{0}^{2i+1}.
  \end{align}
 By Lemma \ref{ord2c}, we get that 
 $$v_{2}( T_{m}(x_{0})-x_{0})=n+s.$$
 By  Lemma \ref{grows}, we deduce that $\sigma_{2}$  strongly splits $s-1$ times then all lifts at level $n+s-1$ grow forever.
 By Proposition \ref{minimal-comp}, we have the following decomposition
  $$2\mathbb{Z}_{2}=\{0\}\bigsqcup \left(\bigsqcup_{n\ge 1}\left(\bigsqcup_{0\leq i < 2^{s-1}}2^{n}(1+2i)+2^{n+s}\mathbb{Z}_{2}\right)\right),$$
  with fixed point $0$ and the minimal components:
  $$2^{n}(1+2i)+2^{n+s}\mathbb{Z}_{2}\ \ (n\geq 1, 0\leq i < 2^{s-1}).$$
  
\smallskip


\noindent\textbf{Decomposition of} $\pm 1+4\mathbb{Z}_{2}$: 
 Let $x_{0}= 1+2^{n}t$ for some $n\geq 2$ and $t\in 1+ 2\mathbb{Z}_{2}$. Then 
   \begin{align*}
     T_{m}(1+2^{n}t)-(1+2^{n}t) &= \sum_{i=0}^{k}c_{2i+1}(1+2^{n}t)^{2i+1}-(1+2^{n}t) \\
     &= (1+2^{n}t)\left(c_{1}-1+\sum_{i=1}^{k}c_{2i+1}(1+2^{n}t)^{2i}\right). \\
   \end{align*}
Since $1$ is a fixed point of $T_{m}$, we have 
\begin{equation}\label{sum}
\sum\limits_{i=0}^{k}c_{2i+1}=1,
\end{equation}
and for $1\leq i \leq k$,
$$c_{2i+1}(1+2^{n}t)^{2i}-c_{2i+1}=\sum_{j=1}^{2i}c_{2i+1}{2i\choose j}2^{jn}t^{j}.
$$
Note that for  $n\geq 2$,  we have  $$v_{2}\left({2i\choose j}2^{jn}\right)>n+1 \text{ for all  } 1<j\leq 2i.$$
Thus for  $1<i\leq k$, 
\begin{equation}\label{xxxx}
v_{2}\left(c_{2i+1}(1+2^{n}t)^{2i}-c_{2i+1}\right)=v_{2}(c_{2i+1})+n+1.
\end{equation}
 By Lemma \ref{ord2c} and equalities  (\ref{sum})  and  (\ref{xxxx}), 
   \begin{equation}\label{vbn}
    v_{2}\left(T_{m}(1+2^{n}t)-(1+2^{n}t)\right)= v_{2}(2^{n+1}c_{3})=s+n+1.
   \end{equation}

For all $n\geq 2$, the cycle $(1)$ of $(T_{m})_{n}$ always splits to be two cycles $(1)$ and $(1+2^{n})$ of  $(T_{m})_{n+1}$.
Let us consider the cycle $(1+2^{n})$ of  $(T_{m})_{n+1}$.

By (\ref{vbn}) we have
$$b_{n+1}(1+2^{n})=\frac{T_{m}(1+2^{n})-(1+2^{n})}{2^{n+1}} \equiv 0 \ ({\rm mod} \ 2).$$
Thus the cycle $(1+2^{n})$ strongly splits.

The formula  (\ref{vbn}) also implies that
$$b_{n+i}(1+2^{n}+2^{n+1}h) \equiv 0 \ ({\rm mod} \ 2) \quad \text{ if }0\leq i  \leq s-1\text{ and } 0\leq h \leq 2^{i}$$ and 
$$\forall x\in 1+2^{n}\mathbb{Z}_{2}, \quad b_{n+1+s}(x)\not\equiv 0  \ ({\rm mod} \ 2).$$
Thus all the lifts of $(1+2^{n})$ split $s-1$ times then all lifts at level $n+s+1$ strongly grow. Therefore, for each  $n\geq 2$, $1+2^{n}\mathbb{Z}_{2}$ consists of  $2^{s}$ minimal components 
$$1+2^{n}(1+2i)+2^{n+s+1}\mathbb{Z}_{2} \ \ (0\leq i < 2^{s}).$$

Similarly, for each $n\geq 2$, we can decompose $-1+2^{n}\mathbb{Z}_{2}$ as $2^{s}$  minimal components $$-1+2^{n}(1+2i)+2^{n+s+1}\mathbb{Z}_{2} \ \ (0\leq i < 2^{s}).$$

We can now give the minimal decomposition of $\pm 1+4\mathbb{Z}_{2}$ as follows $$\pm1+4\mathbb{Z}_{2}=\{\pm1\}\bigsqcup\left(\bigsqcup_{n\ge 2}\left(\bigsqcup_{0\leq i < 2^{s}}\pm 1+2^{n}(1+2i)+2^{n+s+1}\mathbb{Z}_{2}\right)\right).$$
%
%
\end{proof}

\medskip

\bibliographystyle{plain}

\end{document}